\newtheorem{thm}{Theorem}
\newtheorem{proposition}{Proposition}
\newtheorem{lemma}{Lemma}
\newtheorem{corollary}{Corollary}
\newtheorem{remark}{Remark}
\newtheorem{example}{Example}
\title{Enumeration of $m$-Endomorphisms}
\author{Louis Rubin and Brian Rushton}
\begin{document}
\maketitle

\begin{abstract}An $m$-endomorphism on a free semigroup is an endomorphism that sends every generator to a word of length $\leq m$. Two $m$-endomorphisms are combinatorially equivalent if they are conjugate under an automorphism of the semigroup. In this paper, we specialize an argument of N. G. de Bruijn to produce a formula for the number of combinatorial equivalence classes of $m$-endomorphisms on a rank-$n$ semigroup. From this formula, we derive several little-known integer sequences.
\end{abstract}

\section{Acknowledgments} 
We thank the anonymous referee, whose numerous observations and suggestions led to substantial revision. This research was supported by Temple University's Undergraduate Research Program. 
\section{Introduction}

Let $D$ be a nonempty set of symbols, and let $D^+$ be the set of all finite strings of one or more elements of $D$. That is, $D^+=\{d_1\ldots d_k| k\in \mathbb{N}, d_i\in D\}$. Paired with the operation of string concatenation, $D^+$ forms the \textbf{free semigroup} on $D$. If $d_1,\ldots,d_k\in D$, then we refer to the natural number $k$ as the \textbf{length} of the string $d_1\ldots d_k$. Denote the length of $W \in D^+$ by $\left\vert W \right\vert$.

By a \textbf{semigroup endomorphism} (or, simply, an \textbf{endomorphism}) on $D^+$, we mean a mapping $\phi:D^+\to D^+$ satisfying $\phi(W_1W_2)=\phi(W_1)\phi(W_2)$ for all $W_1,W_2 \in D^+$. Note that if $\phi$ is an endomorphism on $D^+$ and $d_1,\ldots,d_k\in D$, then $\phi(d_1\ldots d_k)=\phi(d_1)\ldots \phi(d_k)$; this shows that an endomorphism on $D^+$ is determined by its action on the elements of $D$. On the other hand, any mapping $f:D\to D^+$ extends uniquely to  the endomorphism $\phi_f:D^+\to D^+$ defined by $\phi_f(d_1\ldots d_k)=f(d_1)\ldots f(d_k)$, and it is straightforward to verify that $\phi_f$ is an automorphism (that is, a bijective endomorphism) precisely when $f$ is a bijection on $D$. 
 
\begin{example} \normalfont Let $D=\{a,b\}$, and let $f:D\to D^+$ be defined by $f(a)=ab$ and $f(b)=a$. Then, for example, 
\begin{align*}
\phi_f(ababa)=f(a)f(b)f(a)f(b)f(a)=abaabaab.
\end{align*}

\end{example}

Let $End(D^+)$ be the collection of all endomorphisms on $D^+$, and let $m\in\mathbb{N}$. Then $\phi\in End(D^+)$ is called an \textbf{\textit{m}-endomorphism} 
if and only if $\left\vert \phi(d) \right\vert \leq m$ for all $d\in D$. Note that the mapping $\phi_f$ from Example 1 is an $m$-endomorphism for all $m\geq 2$. Now let $\Gamma$ be the set of all $m$-endomorphisms on $D^+$. That is,
\begin{align*}
\Gamma &=\{\phi\in End(D^+):\phi(D)\subseteq R\},\\
\end{align*}
where $R=\{W\in D^+: \left\vert W\right\vert \leq m\}$. Consider the set $\Omega$ consisting of all mappings $f:D\to R$. Then we may write 
\begin{align*}
\Gamma &=\{\phi_f:f\in \Omega \}.\\
\end{align*}

We can put $\Gamma$ into one-to-one correspondence with $\Omega$ by sending each $m$-endomorphism to its restriction to $D$. Moreover, if $\left\vert D\right\vert = n\in \mathbb{N}$, then the size of these sets is easily evaluated in view of the fact that $\left\vert R\right\vert= \sum\limits_{i=1}^m n^i$. In particular, if $n>1$, then $\left\vert R\right\vert= \frac{n^{m+1}-n}{n-1}$, and 
\begin{align*}
\left\vert\Gamma \right\vert=\left\vert \Omega \right\vert = \bigg(\frac{n^{m+1}-n}{n-1}\bigg)^n.
\end{align*}

However, in this paper we shall be interested in counting the number of \textit{classes} of $m$-endomorphisms under a particular equivalence relation. To motivate our definition of equivalence on $\Gamma$, we define a relation $\sim$ on $\Omega$ as follows:
\begin{center}
$f_1\sim f_2$ $\iff$ there exists a bijection $g:D\to D$ such that 
$f_2\circ g=\phi_g \circ f_1$.
\end{center}
As an exercise, the reader may wish to verify that $\sim$ satisfies the reflexive, symmetric, and transitive properties required of any equivalence relation. In \S 2.1, however, it will be shown that 
$\sim$ is a specific instance of a well-known equivalence relation induced by a group acting on a nonempty set. 

\begin{example} \normalfont Let $f$ be as in Example 1 (with $D=\{a,b\}$). Consider the bijection $g:D\to D$ defined by $g(a)=b$ and $g(b)=a$. Now let $f_1:D\to D^+$ be given by $f_1(a)=b$ and $f_1(b)=ba$. Then 
\begin{align*}
(f_1\circ g)(a)=f_1(g(a))=f_1(b)= ba= g(a)g(b)= \phi_g(ab)=\phi_g(f(a))&= (\phi_g\circ f)(a)
\end{align*}
and 
\begin{align*}
(f_1\circ g)(b)=f_1(g(b))=f_1(a)= b= g(a)= \phi_g(a)=\phi_g(f(b))&= (\phi_g\circ f)(b),\\
\end{align*}
which shows that $f\sim f_1$.
\end{example}

\begin{remark} \normalfont
Perhaps a more intuitive illustration of $\sim$ is as follows. If we let $f$ and $f_1$ be as in the preceding example, then the respective graphs of $f$ and $f_1$ are $\{(a,ab),(b,a)\}$ and $\{(a,b),(b,ba)\}$. But the graph of $f_1$ can be obtained by applying the bijection $g$ to each element of $D$ that appears in the graph of $f$. In other words, 
\begin{align*}
\{(g(a),g(a)g(b)),(g(b),g(a))\}&=\{(a,b),(b,ba)\}.\\
\end{align*}
Since the graphs of $f$ and $f_1$ are ``the same'' up to a permutation of $a$ and $b$, we wish to consider these mappings equivalent, and $\sim$ provides the desired equivalence relation. 
\end{remark}

Extending $\sim$ to an equivalence relation on $\Gamma$ leads to the following definition. If $f,h\in \Omega$, then  $\phi_f$ is \textbf{combinatorially equivalent} to $\phi_h$ if and only if there exists a bijection $g:D\to D$ such that 
$\phi_h\circ\phi_g=\phi_g\circ \phi_f$. To state precisely the aim of this paper: Given a set of symbols $D$ with $\left\vert D\right\vert =n$, we wish to produce a formula for the number of equivalence classes in $\Gamma$ under the relation of combinatorial equivalence. To this end, we shall specialize an argument of N. G. de Bruijn (namely, that for Theorem 1 in \cite{de1972enumeration}) to produce a formula for the number of classes in $\Omega$ under the relation $\sim$. But it is easy to check that for all $f,h \in \Omega$, $f\sim h$ if and only if $\phi_f$ is combinatorially equivalent to $\phi_h$. Hence, there is a well-defined correspondence given by 
\begin{center}
$[f]\leftrightarrow
[\phi_f]$
\end{center}
between the equivalence classes in $\Omega$ and those in $\Gamma$, and it follows that our formula will also provide the number of $m$-endomorphisms on $D^+$ up to combinatorial equivalence. Moreover, once this formula is obtained, we can fix one of the variables $n,m$ and let the other run through the natural numbers in order to derive integer sequences, many of which appear to be little-known.

\subsection{Group Actions}

For the reader's convenience, we review group actions. The following material (through Proposition 1) is paraphrased from \cite{malik1997fundamentals}. Let $G$ be a group and $S$ a nonempty set. 
A \textbf{left action} of $G$ on $S$ is a function 
\begin{center}
$\cdot: G\times S \to S,$\\ 
$\cdot (g,s)\to g\cdot s$
\end{center}
such that for all $g_1,g_2\in G$ and for all $s\in S$, 
\begin{enumerate}
\item $(g_1g_2)\cdot s = g_1\cdot (g_2\cdot s)$ (where $g_1g_2$ denotes the product of $g_1,g_2$ in $G$), and

\item $e\cdot s = s$ (where $e$ is the identity element of $G$).
\end{enumerate}
A left action induces the well-known equivalence relation $E$ on the set $S$ given by 
\begin{center}
$(a,b)\in E \iff g\cdot a = b$ for some $g\in G$,
\end{center}
for all $a,b\in S$.
We refer to the equivalence classes under this relation as the \textbf{orbits} of $G$ on $S$. The following result (known as ``Burnside's Lemma") gives an expression for the number of these, provided that $G$ and $S$ are finite.
\begin{proposition}\normalfont{\cite{malik1997fundamentals}} Let $S$ be a finite, nonempty set, and suppose there is a left action of a finite group $G$ on $S$. Then the number of orbits of $G$ on $S$ is
\begin{center}
$\frac{1}{\left\vert G\right\vert}\sum\limits_{g\in G}\left\vert \{s\in S: g\cdot s = s\}\right\vert$.
\end{center}
\end{proposition}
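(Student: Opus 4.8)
The plan is to run the classical double-counting argument on the set of fixed pairs
\[
F = \{(g,s)\in G\times S : g\cdot s = s\}.
\]
Because $G$ and $S$ are finite, $F$ is finite, and I would evaluate $|F|$ in two different ways. First, partitioning $F$ according to its first coordinate: for each fixed $g\in G$, the pairs in $F$ with that first coordinate correspond exactly to the set $\{s\in S : g\cdot s = s\}$. Summing over all $g$ therefore gives
\[
|F| = \sum_{g\in G}\left|\{s\in S : g\cdot s = s\}\right|,
\]
which is precisely the quantity that appears (before dividing by $|G|$) in the statement to be proved.

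Next I would evaluate $|F|$ by partitioning according to the second coordinate. For each $s\in S$, the pairs in $F$ with second coordinate $s$ correspond to the \emph{stabilizer} $G_s = \{g\in G : g\cdot s = s\}$. A short verification, using the two axioms of a left action, shows that $G_s$ is a subgroup of $G$: the identity fixes $s$ by axiom (2), and closure and inverses follow from axiom (1). This yields the second expression
\[
|F| = \sum_{s\in S}|G_s|.
\]

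The heart of the argument, and the step I expect to demand the most care, is the Orbit--Stabilizer relation $|G_s|\cdot|O_s| = |G|$, where $O_s$ denotes the orbit of $s$. I would prove this by exhibiting a bijection between the left cosets of $G_s$ in $G$ and the elements of $O_s$, sending the coset $hG_s$ to $h\cdot s$. Well-definedness and injectivity both reduce to the equivalence $h\cdot s = h'\cdot s \iff h^{-1}h'\in G_s$, which again uses only the action axioms, while surjectivity is immediate from the definition of the orbit. Lagrange's theorem then gives $|G| = [G:G_s]\,|G_s| = |O_s|\,|G_s|$, so that $|G_s| = |G|/|O_s|$.

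Finally I would substitute this into the second count and regroup the sum by orbits. Letting $\mathcal{O}$ denote the set of orbits and using that the orbits partition $S$,
\[
|F| = \sum_{s\in S}\frac{|G|}{|O_s|} = |G|\sum_{O\in\mathcal{O}}\sum_{s\in O}\frac{1}{|O|} = |G|\sum_{O\in\mathcal{O}} 1 = |G|\cdot|\mathcal{O}|,
\]
since each orbit $O$ contributes $|O|\cdot(1/|O|) = 1$. Equating the two expressions for $|F|$ and dividing by $|G|$ produces the stated formula. The finiteness hypotheses on $G$ and $S$ are what guarantee every cardinality in sight is a well-defined natural number, so they are used throughout rather than at a single isolated point.
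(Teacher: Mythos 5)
Your proof is correct and complete: the double count of the fixed-pair set $F=\{(g,s)\in G\times S: g\cdot s=s\}$, the verification from the action axioms that each stabilizer $G_s$ is a subgroup, the coset bijection $hG_s\mapsto h\cdot s$ giving $|G_s|\,|O_s|=|G|$, and the regrouping of $\sum_{s\in S}|G|/|O_s|$ by orbits so that each orbit contributes exactly $1$ are all carried out without gaps. Note that the paper itself supplies no proof of this proposition --- it is stated as a known result (Burnside's Lemma) with a citation to \cite{malik1997fundamentals} --- and your argument is the standard double-counting plus orbit--stabilizer proof found in such references, so it faithfully fills in exactly the proof the paper omits.
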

Thus, the number of orbits of $G$ on $S$ equals the average number of elements of $S$ that are ``fixed" by an element of $G$. We now show that the relation $\sim$ from \S 2 is a specific instance of the relation $E$ described above. To see this, let $D$ be a finite nonempty set, and let $Sym(D)$ denote the symmetric group on $D$ (i.e., the group of all bijections on $D$). Then $Sym(D)$ acts on the set $\Omega$ according to the rule 
\begin{center}
$g\cdot f = \phi_g \circ f\circ g^{-1}$,
\end{center}
for all $g\in Sym(D), f\in \Omega$. (One can easily verify that $\cdot$ defined in this way is indeed a left action.) Now, for any $f_1,f_2\in \Omega$, we have 
\begin{align*}
f_1\sim f_2 &\iff  f_2\circ g = \phi_g \circ f_1 \text{ for some } g\in Sym(D) \\ &\iff f_2= \phi_g\circ f_1 \circ g^{-1}  \text{ for some } g\in Sym(D) \\  &\iff  g\cdot f_1 = f_2  \text{ for some } g\in Sym(D)\\ &\iff (f_1,f_2)\in E.\\
\end{align*}

It follows that the equivalence classes in $\Omega$ under the relation $\sim$ are just the orbits of $Sym(D)$ on $\Omega$. Enumerating the elements of $Sym(D)$ by $g_1,\ldots,g_{n!}$, we find the number of orbits to be 
\begin{equation}
\frac{1}{n!}\sum\limits_{r=1}^{n!}
\left\vert\{f\in \Omega: f\circ g_r=\phi_{g_r}\circ f\}\right\vert\\.
\end{equation}

For any permutation $g$ of a finite set, and for each natural number $j$, let $c(g,j)$ denote the number of cycles of length \footnote{There should be no confusion between the notions of `string length' and `cycle length'.} $j$ occurring in the cycle decomposition of $g$. (This notation comes from \cite{de1972enumeration}.) The quantities $c(g,j)$ will play a role in the evaluation of $\left\vert\{f\in \Omega: f\circ g_r=\phi_{g_r}\circ f\}\right\vert$, which occurs in the next section. Our evaluation is a modification of de Bruijn's counting argument in \S 5.12 of \cite{de1964polya}. 

\section{Main Results}

We now produce a formula for the number of equivalence classes in $\Omega$ under the relation $\sim$. Let $D$ be a finite set, and suppose that $g\in Sym(D)$ is the product of disjoint cycles of lengths $k_1,k_2,\ldots,k_{\ell}$, where $k_1\leq k_2\leq \ldots \leq k_{\ell}$. Then the sequence $k_1,k_2,\ldots,k_{\ell}$ is called the \textbf{cycle type} of $g$. For example, if $D=\{a,b,c,d,e\}$, then the permutation $g=(a)(b,c)(d,e)$ has cycle type 1,2,2. The following lemma will be useful.  
\begin{lemma}
Let $D$ be a finite set, and let  $g\in Sym(D)$ have cycle type $k_1,k_2,\ldots,k_{\ell}$. For each $1\leq i\leq \ell$, select a single $d_i\in D$ from the cycle corresponding to $k_i$. (Thus, $k_i$ is the smallest natural number such that $g^{k_i}(d_i)=d_i$.) Now suppose that $f\in \Omega$. Then
$f\circ g=\phi_g \circ f$ if and only if for each $1\leq i\leq \ell$, the following holds:
\begin{enumerate}
\item $(f\circ g^j)(d_i)=(\phi_g^j\circ f)(d_i)$ for all $j\in \mathbb{N}.$
\item $f(d_i)$ is of the form $d'_1\ldots d'_{k\leq m}$, where $d'_1,\ldots, d'_k\in D$ each belong to a cycle in $g$ whose length divides $k_i$. 
\end{enumerate} 
\end{lemma}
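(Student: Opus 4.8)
The plan is to prove the two implications of the biconditional separately, working throughout from the pointwise reformulation of the functional equation: since both $f\circ g$ and $\phi_g\circ f$ are maps $D\to D^+$ and such maps are determined by their values on $D$, the equation $f\circ g=\phi_g\circ f$ is equivalent to the assertion that $f(g(d))=\phi_g(f(d))$ for every $d\in D$. Establishing this reformulation first keeps the rest of the argument purely pointwise.

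For the forward direction, I would assume $f\circ g=\phi_g\circ f$ and first derive condition 1 by a routine induction on $j$. The base case is the hypothesis, and the inductive step uses $f\circ g^{j+1}=(f\circ g)\circ g^{j}=(\phi_g\circ f)\circ g^{j}=\phi_g\circ(f\circ g^{j})=\phi_g\circ(\phi_g^{j}\circ f)=\phi_g^{j+1}\circ f$; evaluating at $d_i$ gives condition 1. To obtain condition 2, I would specialize condition 1 to $j=k_i$. Since $g^{k_i}(d_i)=d_i$, this yields $f(d_i)=\phi_g^{k_i}(f(d_i))$. Writing $f(d_i)=d'_1\ldots d'_k$ and using that $\phi_g$ acts on a single generator exactly as $g$ does, the right-hand side becomes $g^{k_i}(d'_1)\ldots g^{k_i}(d'_k)$. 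The crucial step is then the observation that two strings of equal length coincide only if they agree letter by letter, forcing $g^{k_i}(d'_s)=d'_s$ for each $s$; equivalently, the length of the cycle containing $d'_s$ divides $k_i$. That $k\leq m$ is immediate from $f(d_i)\in R$.

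For the reverse direction, I would assume conditions 1 and 2 hold for every $i$. Because the cycles of $g$ partition $D$, any $d\in D$ may be written as $d=g^{j}(d_i)$ for some $i$ and some integer $j\geq 0$. I would then evaluate both sides of the pointwise equation by means of condition 1: on one hand $f(g(d))=f(g^{j+1}(d_i))=\phi_g^{j+1}(f(d_i))$, and on the other $\phi_g(f(d))=\phi_g\big(\phi_g^{j}(f(d_i))\big)=\phi_g^{j+1}(f(d_i))$ (the case $j=0$ using the value $f(d_i)$ directly). The two agree, so $f(g(d))=\phi_g(f(d))$ for every $d$, which is exactly the functional equation.

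I expect the only genuine subtlety to be the letter-wise argument underlying condition 2, together with the conceptual point that condition 2 is precisely the periodicity requirement $\phi_g^{k_i}(f(d_i))=f(d_i)$ that makes the values prescribed by condition 1 consistent as one travels around the cycle of $d_i$. Indeed, for a given $f$ condition 1 already forces condition 2 by taking $j=k_i$, so the two conditions are not logically independent and condition 1 alone suffices for the reverse direction; I would flag this, since it is condition 2, phrased in terms of cycle lengths dividing $k_i$, that will be the convenient form for the enumeration carried out in the next section.
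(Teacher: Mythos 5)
Your proposal is correct and follows essentially the same route as the paper's proof: an induction on $j$ for condition 1, specialization to $j=k_i$ with a letter-by-letter comparison to get condition 2, and the converse via writing an arbitrary $d$ as $g^{j}(d_i)$ and applying condition 1. Even your closing observation that condition 2 is redundant in the reverse direction matches the paper, which notes there that ``Condition 2 is superfluous here.''
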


\begin{proof}
First assume that $f\circ g=\phi_g\circ f$. Then condition 1 follows from an inductive argument. But $f(d_i)=f(g^{k_i}(d_i))=\phi_g^{k_i}(f(d_i))$. Write $f(d_i)=d'_1\ldots d'_k$, where $d'_1,\ldots, d'_k\in D$ and $k\leq m$. Then 
\begin{center}
$d'_1\ldots d'_k=\phi_g^{k_i}(d'_1\ldots d'_k)=g^{k_i}(d'_1)\ldots g^{k_i}(d'_k).$
\end{center}
In particular, for each $1\leq t\leq k$, we have $d'_t=g^{k_i}(d'_t)$. This implies that 
\begin{center}
$\Big(d'_t,g(d'_t),g^2(d'_t),\ldots,g^{k_i-1}(d'_t)\Big)$
\end{center}
is a cycle whose length divides $k_i$. The conclusion follows. 

Conversely, suppose that condition 1 holds. (Condition 2 is superfluous here.) Let $d\in D$. Then there exist $i,j\in \mathbb{N}$ such that $d=g^j(d_i)$. Now, 
\begin{align*}
f(g(d))&=f(g(g^j(d_i)))\\&=f(g^{1+j}(d_i))\\&=\phi_g^{1+j}(f(d_i))\\&=\phi_g(\phi_g^j(f(d_i)))\\&=\phi_g(f(g^j(d_i)))\\&=\phi_g(f(d)).\\
\end{align*}
Therefore, $f\circ g=\phi_g\circ f$, so the proof is complete. 
\end{proof}

Once again, suppose that $\left\vert D\right\vert =n$, and label the elements of $Sym(D)$ by $g_1,\ldots,g_{n!}$. For each $1\leq r\leq n!$, we can find the number of $f\in \Omega$ satisfying 
\begin{equation}
f\circ g_r=\phi_{g_r}\circ f.
\end{equation}
Suppose that $g_r$ has cycle type $k_{r1},k_{r2},\ldots,k_{r\ell_r}$. For each $1\leq i\leq \ell_r$, select a single element $d_{ri}\in D$ from the cycle corresponding to $k_{ri}$. Then Lemma 1 implies that any $f\in \Omega$ satisfying (2) is determined by its values on each $d_{ri}$. Hence, to find the number of $f$ satisfying (2), we need only count the number of possible images of $d_{ri}$ under such an $f$, and then take the product over all $i$. But the $m$ or fewer elements of $D$ comprising the string $f(d_{ri})$ must each belong to a cycle in the decomposition of $g_r$ whose length divides $k_{ri}$. For each  $1\leq k\leq m$, there are 
\begin{center}
$\Big(\sum\limits_{j|k_{ri}}jc(g_r,j)\Big)^k$
\end{center}
choices of $f(d_{ri})$ such that 
$\left\vert f(d_{ri})\right\vert=k$. Hence, there are 
\begin{center}
$\sum\limits_{k=1}^m\Big(\sum\limits_{j|k_{ri}}jc(g_r,j)\Big)^k$
\end{center}
total choices of $f(d_{ri})$. Taking the product over all $i$, it follows that the number of $f$ satisfying (2) is 
\begin{equation}
\prod\limits_{i=1}^{\ell_r}\bigg(\sum\limits_{k=1}^m\Big(\sum\limits_{j|k_{ri}}jc(g_r,j)\Big)^k\bigg).
\end{equation}

Thus, we've evaluated $\left\vert\{f\in \Omega: f\circ g_r=\phi_{g_r}\circ f\}\right\vert$, and putting together (1) and (3) gives an expression for the number of equivalence classes in $\Omega$ under the relation $\sim$. Recalling that these classes are in one-to-one correspondence with the classes in $\Gamma$ under the relation of combinatorial equivalence, we obtain our main result:
\begin{thm}
If $\left\vert D\right\vert =n$, then the number of $m$-endomorphisms on $D^+$, up to combinatorial equivalence, is the value of 
\begin{equation}
\frac{1}{n!}\sum\limits_{r=1}^{n!}
\Bigg(\prod\limits_{i=1}^{\ell_r}\bigg(\sum\limits_{k=1}^m\Big(\sum\limits_{j|k_{ri}}jc(g_r,j)\Big)^k\bigg)\Bigg),
\end{equation}
where $g_1,\ldots,g_{n!}$ are the elements of $Sym(D)$, and $k_{r1},\ldots,k_{r\ell_r}$ is the cycle type of $g_r$.  
\end{thm}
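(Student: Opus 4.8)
The plan is to assemble the pieces already in place: translate the counting problem into an orbit count, apply Burnside's Lemma (Proposition 1), and then evaluate the resulting sum of fixed-point cardinalities using Lemma 1. First I would recall from \S 2.1 that the equivalence classes in $\Omega$ under $\sim$ are precisely the orbits of $Sym(D)$ acting on $\Omega$ by $g\cdot f=\phi_g\circ f\circ g^{-1}$, and that these orbits correspond bijectively to the combinatorial equivalence classes in $\Gamma$. Applying Proposition 1 to this action yields expression (1): the number of orbits equals the average, over $g_r\in Sym(D)$, of the number of $f\in\Omega$ fixed by $g_r$. Since the fixed-point condition $g_r\cdot f=f$ is equivalent to $f\circ g_r=\phi_{g_r}\circ f$, the entire problem reduces to evaluating $\left\vert\{f\in\Omega:f\circ g_r=\phi_{g_r}\circ f\}\right\vert$ for each $r$, and then substituting the result into (1).

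The heart of the argument is this per-permutation count, and this is exactly where Lemma 1 is designed to be used. I would fix $g_r$ of cycle type $k_{r1},\ldots,k_{r\ell_r}$ and select one representative $d_{ri}$ from each cycle. Part 1 of Lemma 1 shows that any $f$ satisfying (2) is completely determined by the values $f(d_{r1}),\ldots,f(d_{r\ell_r})$, via $f(g^j(d_{ri}))=\phi_{g_r}^j(f(d_{ri}))$; conversely, assigning these values and extending by this rule produces an $f$ satisfying (2), provided each assignment respects part 2. Hence the number of admissible $f$ factors as a product over $i$ of the number of admissible choices for the single image $f(d_{ri})$, and the problem localizes to counting, for each $i$, the strings $d'_1\ldots d'_k$ with $1\le k\le m$ whose letters $d'_t$ all lie in cycles of $g_r$ whose length divides $k_{ri}$.

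The only genuine calculation is to count the admissible letters. Here I would observe that the number of elements of $D$ lying in a cycle of length $j$ is exactly $j\,c(g_r,j)$, so the number lying in some cycle whose length divides $k_{ri}$ is $\sum_{j|k_{ri}}j\,c(g_r,j)$. Each of the $k$ positions of $f(d_{ri})$ may then be filled independently by any such letter, giving $\big(\sum_{j|k_{ri}}j\,c(g_r,j)\big)^{k}$ strings of length exactly $k$; summing over $k=1,\ldots,m$ counts all admissible images of $d_{ri}$, and taking the product over $i$ gives expression (3). Substituting (3) into (1) produces formula (4).

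I expect the subtle point — rather than any hard computation — to lie in justifying the product decomposition, namely that admissibly chosen images of the representatives always extend to a genuine $f\in\Omega$ satisfying (2). The issue is well-definedness of the extension: since $g^{k_{ri}}(d_{ri})=d_{ri}$, the rule $f(g^j(d_{ri}))=\phi_{g_r}^j(f(d_{ri}))$ is consistent only if $\phi_{g_r}^{k_{ri}}(f(d_{ri}))=f(d_{ri})$, and this is exactly what part 2 of Lemma 1 guarantees (the constraint that every letter sit in a cycle whose length divides $k_{ri}$ forces each letter, hence the whole string, to be fixed by $\phi_{g_r}^{k_{ri}}$). Once this is noted, the converse direction of Lemma 1 certifies that the extended $f$ satisfies (2), so the count in (3) is neither an over- nor an undercount, and the theorem follows.
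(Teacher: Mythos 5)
Your proposal is correct and follows essentially the same route as the paper: Burnside's Lemma (Proposition 1) reduces the problem to counting $f\in\Omega$ with $f\circ g_r=\phi_{g_r}\circ f$, Lemma 1 localizes that count to the admissible images of one representative $d_{ri}$ per cycle, and the letter count $\sum_{j|k_{ri}}jc(g_r,j)$ yields expression (3), which substituted into (1) gives (4). Your closing paragraph in fact spells out a point the paper leaves implicit --- that condition 2 of Lemma 1 guarantees $\phi_{g_r}^{k_{ri}}(f(d_{ri}))=f(d_{ri})$, so every admissible assignment extends consistently to an $f$ satisfying (2) and the product in (3) is an exact count --- which is a welcome bit of extra rigor rather than a deviation.
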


\begin{example} \normalfont Let $D=\{a,b\}$. We find the number of classes of 1-endomorphisms on $D^+$. The elements of $Sym(D)$ (in cycle notation) are $g_1=(a)(b)$ and $g_2=(a,b)$. Evidently, $c(g_1,1)=2$, $c(g_2,1)=0$, and $c(g_2,2)=1$. Using Theorem 1, there are 
\begin{align*}
\frac{1}{2}\bigg(c(g_1,1)^2+2c(g_2,2)\bigg)&=\frac{1}{2}(2^2+2)\\&=3\\
\end{align*}
classes of 1-endomorphisms on $D^+$. These are given by
\begin{align*}
\Bigg\{\begin{tabular}{ l c r }
    $a$ & $\to$ & $a$ \\ 
    $b$ & $\to$ & $b$ \\
\end{tabular}\Bigg\},\text{ } \Bigg\{\begin{tabular}{ l c r }
    $a$ & $\to$ & $b$ \\ 
    $b$ & $\to$ & $a$ \\
\end{tabular}\Bigg\}, \text{ and }\Bigg\{\begin{tabular}{ l c r }
    $a$ & $\to$ & $a$ \\ 
    $b$ & $\to$ & $a$ \\
\end{tabular}\equiv\begin{tabular}{ l c r }
    $a$ & $\to$ & $b$ \\ 
    $b$ & $\to$ & $b$ \\
\end{tabular}\Bigg\}. 
\end{align*}

\end{example}

We can extend the result of Example 3 by fixing $n=2$ and letting $m$ be arbitrary. From (4), we find that the number of classes $m$-endomorphisms on $D^+$, where $\left\vert D\right\vert=2$, is \begin{align*}
\frac{1}{2}\Big((2^{m+1}-2)^2+(2^{m+1}-2)\Big).\\
\end{align*}
Running $m$ through the natural numbers, we obtain values $3,21,105,465,1953,\ldots$. This is the sequence A134057 in the On-line Encyclopedia of Integers. (See \cite{OEIS}.) However, for $n=3$, the number of classes of $m$-endomorphisms becomes 
\begin{align*}
\frac{1}{6}\Bigg(\bigg(\frac{3^{m+1}-3}{2}\bigg)^3 + 3m\bigg(\frac{3^{m+1}-3}{2}\bigg) + 2\bigg(\frac{3^{m+1}-3}{2}\bigg)\Bigg).\\
\end{align*}
Letting $m=1,2,3,4,\ldots$ gives values $7, 304, 9958, 288280,\ldots$. This sequence appears to be little-known, and has been submitted by the authors to the OEIS. 

\subsection{An Alternative Formulation of Theorem 1}

We now present a slight rewording of Theorem 1. In order to compute the number of equivalence classes of $m$-endomorphisms (where $\left\vert D\right\vert =n$), we need not, in practice, consider each element of $Sym(D)$ individually. Rather, we need only consider the cycle types of these permutations. The following well-known result gives the number of permutations in $Sym(D)$ of a given cycle type. 

\begin{proposition}\normalfont{\cite{Dummitt}} Let $\left\vert D\right\vert =n$, and let $g\in Sym(D)$. Suppose that $m_1,m_2,\ldots,m_s$ are the \textit{distinct} integers appearing in the cycle type of $g$. For each $j\in \{1,2,\ldots,s\}$, abbreviate $c_j=c(g,m_j)$. Let $C_g$ be the set of all permutations in $Sym(D)$ whose cycle type is that of $g$. Then  
\begin{equation}
\left\vert C_g\right\vert =\frac{n!}{\prod\limits_{j=1}^sc_j!m_j^{c_j}}.
\end{equation}
\end{proposition}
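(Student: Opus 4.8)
The plan is to prove the standard formula for the size of a conjugacy class in $Sym(D)$ by exhibiting a bijection between the set of permutations of a given cycle type and an explicit collection of combinatorial data, then counting the redundancy. First I would observe that since all permutations of a fixed cycle type are conjugate, $C_g$ is precisely the conjugacy class of $g$, so by the orbit–stabilizer theorem (applied to the conjugation action of $Sym(D)$ on itself) we have $\left\vert C_g\right\vert = n! / \left\vert Z(g)\right\vert$, where $Z(g)$ is the centralizer of $g$. It thus suffices to show that $\left\vert Z(g)\right\vert = \prod_{j=1}^s c_j!\, m_j^{c_j}$.

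To compute the centralizer, I would describe explicitly which permutations $\sigma$ commute with $g$. Writing $\sigma g \sigma^{-1} = g$ and recalling that conjugation by $\sigma$ acts on cycle notation by relabeling each symbol $d$ as $\sigma(d)$, I would argue that $\sigma$ commutes with $g$ exactly when $\sigma$ permutes the cycles of $g$ among themselves in a length-preserving way, together with a choice, for each cycle, of how far to rotate it. Concretely, for each cycle length $m_j$ there are $c_j$ cycles of that length; a centralizing $\sigma$ may permute these $c_j$ cycles in any of $c_j!$ ways, and then, on each of the $c_j$ cycles, it may apply any of the $m_j$ cyclic rotations independently (since the only automorphisms of a single $m_j$-cycle that come from relabeling are its $m_j$ rotations). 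This yields exactly $\prod_{j=1}^s c_j!\, m_j^{c_j}$ commuting permutations.

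The main obstacle will be making the centralizer count rigorous rather than merely plausible: I would need to verify carefully that every length-preserving assignment (a permutation of the equal-length cycles together with rotation amounts) does produce a permutation commuting with $g$, and conversely that every such commuting permutation arises uniquely this way. The forward direction is a direct check that the prescribed relabeling fixes each cycle of $g$ setwise with matching rotation. For the converse and uniqueness, I would use the fact that a symbol $d$ lying in a particular cycle of $g$ must be sent by $\sigma$ into a cycle of the same length, and that once the image of one symbol in a cycle is fixed, the images of all remaining symbols in that cycle are forced by the commutation relation $\sigma(g(d)) = g(\sigma(d))$; this pins down the rotation and rules out double-counting.

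Finally, with $\left\vert Z(g)\right\vert$ in hand, I would substitute into $\left\vert C_g\right\vert = n!/\left\vert Z(g)\right\vig$ to obtain the claimed formula (5). An alternative route, avoiding the orbit–stabilizer step, is a direct count: arrange the $n$ symbols in a row in $n!$ ways and read them off into cycle slots of the prescribed lengths; one then divides by the overcounting, namely the $m_j$ rotations within each of the $c_j^{}$ cycles of length $m_j$ (giving $m_j^{c_j}$) and the $c_j!$ reorderings of the equal-length cycles. I expect the orbit–stabilizer argument to be the cleaner presentation, so I would lead with it and relegate the direct count to a remark if space permits.
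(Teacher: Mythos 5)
Your proof is correct, but note that the paper offers no proof of this proposition to compare against: Proposition 2 is stated as a known fact and cited to Dummit and Foote, so you are supplying an argument where the paper supplies a reference. Your orbit--stabilizer route is the standard one and is sound: $C_g$ is the conjugacy class of $g$ since cycle type determines conjugacy in $Sym(D)$, and the centralizer count $\left\vert Z(g)\right\vert=\prod_{j=1}^s c_j!\,m_j^{c_j}$ is exactly right. One point of wording to tighten: describing the per-cycle freedom as ``any of the $m_j$ cyclic rotations'' is only literally accurate when $\sigma$ fixes that cycle setwise; when $\sigma$ permutes equal-length cycles nontrivially, the correct statement --- which you in fact use in your converse/uniqueness step --- is that $\sigma$ may send a chosen base point of each cycle to any of the $m_j$ symbols of the target cycle, after which $\sigma(g(d))=g(\sigma(d))$ forces all remaining images; this gives the same count of $m_j$ choices per cycle, so the factor $c_j!\,m_j^{c_j}$ stands. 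Your alternative direct count (fill cycle slots from the $n!$ orderings and divide by the $m_j^{c_j}$ within-cycle rotations and $c_j!$ reorderings of equal-length cycles) is the other standard proof and is also valid; either would serve, and the centralizer version has the advantage of explaining \emph{why} the denominator is the order of a subgroup, while the direct count is more elementary and avoids invoking group actions beyond what the paper already uses for Burnside's Lemma.
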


For convenience, we shall say that $g\in Sym(D)$ \textbf{fixes} the mapping $f\in \Omega$ if and only if $f\circ g=\phi_g\circ f$. Now, two bijections in $Sym(D)$ with the same cycle type must fix the same number of $f\in \Omega$. Therefore, in order to derive an expression for the number of classes of $m$-endomorphisms on $D^+$, we can select a single representative in $Sym(D)$ of each possible cycle type, then determine the number of $f\in \Omega$ fixed by each representative using expression (3), multiply this number by the corresponding value of (5), and then sum up over all of our representatives and divide by $n!$. But the cycle types in $Sym(D)$ are precisely the \textbf{integer partitions} of $n$, namely, the nondecreasing sequences of natural numbers whose sum is $n$. If $p(n)$ denotes the number of integer partitions of $n$, then we may restate Theorem 1 as follows. 
\begin{corollary}
Let $\left\vert D\right\vert=n$, and suppose that $g_1,\ldots,g_{p(n)}\in Sym(D)$ have distinct cycle types. Then the number of $m$-endomorphisms on $D^+$, up to combinatorial equivalence, is the value of 
\begin{equation}
\frac{1}{n!}\sum\limits_{r=1}^{p(n)}
\Bigg(\left\vert C_{g_r}\right\vert\prod\limits_{i=1}^{\ell_r}\bigg(\sum\limits_{k=1}^m\Big(\sum\limits_{j|k_{ri}}jc(g_r,j)\Big)^k\bigg)\Bigg),
\end{equation}
where $k_{r1},\ldots,k_{r\ell_r}$ is the cycle type of $g_r$, and $C_{g_r}$ is as in Proposition 2.
\end{corollary}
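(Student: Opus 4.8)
The plan is to derive Corollary 1 as an immediate consequence of Theorem 1 by grouping the $n!$ summands in formula (4) according to cycle type, invoking Proposition 2 to count how many permutations share each cycle type. The key observation—already recorded in the excerpt—is that the inner product
\[
\prod\limits_{i=1}^{\ell_r}\bigg(\sum\limits_{k=1}^m\Big(\sum\limits_{j|k_{ri}}jc(g_r,j)\Big)^k\bigg)
\]
depends on $g_r$ only through the multiset of cycle lengths $k_{r1},\ldots,k_{r\ell_r}$ (equivalently, through the values $c(g_r,j)$). Hence any two permutations of the same cycle type contribute the same summand to (4). This is the content of the remark that ``two bijections in $Sym(D)$ with the same cycle type must fix the same number of $f\in\Omega$,'' and it is the linchpin that makes the regrouping legitimate.

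First I would make precise the partition of $Sym(D)$ into conjugacy classes by cycle type. Since the distinct cycle types in $Sym(D)$ correspond bijectively to the integer partitions of $n$, there are exactly $p(n)$ of them; choosing one representative $g_1,\ldots,g_{p(n)}$ per type, the sets $C_{g_1},\ldots,C_{g_{p(n)}}$ form a disjoint partition of $Sym(D)$, so that $\sum_{r=1}^{p(n)}\lvert C_{g_r}\rvert = n!$. Next I would rewrite the single sum over all $n!$ permutations in (4) as a double sum—first over the $p(n)$ representatives, then over the permutations in each class $C_{g_r}$:
\[
\sum_{r=1}^{n!}(\,\cdots\,) \;=\; \sum_{r=1}^{p(n)}\;\sum_{g\in C_{g_r}}\Bigg(\prod_{i=1}^{\ell_r}\bigg(\sum_{k=1}^m\Big(\sum_{j\mid k_{ri}}j\,c(g,j)\Big)^k\bigg)\Bigg).
\]
Because the bracketed quantity is constant on each class $C_{g_r}$ (equal to its value at the representative $g_r$), the inner sum collapses to $\lvert C_{g_r}\rvert$ times that constant value, yielding exactly (6) after dividing by $n!$. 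Finally, I would cite Proposition 2 to supply the closed form $\lvert C_{g_r}\rvert = n!\big/\prod_{j=1}^s c_j!\,m_j^{c_j}$ for those readers who wish to evaluate the expression in practice.

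The step requiring the most care—though it is more a matter of bookkeeping than of genuine difficulty—is justifying that the product term is genuinely an invariant of the cycle type. I would verify this by noting that $c(g,j)$ depends only on the cycle type by definition, and that the cycle lengths $k_{ri}$ and the length $\ell_r$ appearing as indices are themselves determined by the cycle type; thus every ingredient of the product is a function of the cycle type alone. Since Theorem 1 has already been established, no new enumerative content is introduced here: the corollary is a reorganization of the same sum, and the main obstacle is simply ensuring that the index conventions (which $d_{ri}$, which $k_{ri}$) are consistent between the representative and the arbitrary class member. Once that consistency is checked, the identity follows directly.
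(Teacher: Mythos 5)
Your proposal is correct and follows essentially the same route as the paper: the paper likewise observes that permutations of the same cycle type fix the same number of $f\in\Omega$, so that expression (3) is an invariant of the cycle type, and then regroups the sum in Theorem 1 over one representative per cycle type (i.e., per integer partition of $n$), weighting by $\left\vert C_{g_r}\right\vert$ from Proposition 2 and dividing by $n!$. Your additional bookkeeping—checking that $\ell_r$, the $k_{ri}$, and the $c(g,j)$ are all determined by the cycle type, and that $\sum_{r=1}^{p(n)}\left\vert C_{g_r}\right\vert = n!$—only makes explicit what the paper leaves implicit.
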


\begin{example}\normalfont To illustrate Corollary 1, we find the number of classes of $m$-endomorphisms when $\left\vert D\right\vert =4$. Let $D=\{a,b,c,d\}$. As previously mentioned, the 
cycle types in $Sym(D)$ are the integer partitions of 4. There are five such partitions:
\begin{align*}
4 &= 1+1+1+1\\ &=1+1+2\\&= 2+2\\&= 1+3\\ &= 4.\\
\end{align*}
Hence, the bijections 
\begin{center}
$g_1=(a)(b)(c)(d)$, $g_2=(a)(b)(c,d)$, $g_3=(a,b)(c,d)$, $g_4=(a)(b,c,d)$, and $g_5=(a,b,c,d)$
\end{center}
encompass all possible cycle types in $Sym(D)$. Direct calculation using (5) yields 
\begin{center}
$\left\vert C_{g_1}\right\vert = 1$, $\left\vert C_{g_2}\right\vert = 6$, $\left\vert C_{g_3}\right\vert = 3$, $\left\vert C_{g_4}\right\vert = 8$, and $\left\vert C_{g_5}\right\vert = 6$. 
\end{center}
Thus, by Corollary 1, the number of classes of $m$-endomorphisms when $n=4$ is
\begin{center}
$\frac{1}{24}\Bigg(\bigg(\frac{4^{m+1}-4}{3}\bigg)^4+ 6\Big(2^{m+1}-2\Big)^2\bigg(\frac{4^{m+1}-4}{3}\bigg)+3\bigg(\frac{4^{m+1}-4}{3}\bigg)^2 $
\end{center}
\begin{center}
$+ 8m\bigg(\frac{4^{m+1}-4}{3}\bigg)+6\bigg(\frac{4^{m+1}-4}{3}\bigg)\Bigg).$ 
\end{center}
\end{example}

Proceeding along the lines of Example 4, we find that there are
\begin{center}
$\frac{1}{120}\Bigg(\bigg(\frac{5^{m+1}-5}{4}\bigg)^5+10\bigg(\frac{3^{m+1}-3}{2}\bigg)^3\bigg(\frac{5^{m+1}-5}{4}\bigg)+15m\bigg(\frac{5^{m+1}-5}{4}\bigg)^2$
\end{center}
\begin{center}$+ 20\big(2^{m+1}-2\big)^2\bigg(\frac{5^{m+1}-5}{4}\bigg)+20\big(2^{m+1}-2\big)\bigg(\frac{3^{m+1}-3}{2}\bigg)$
\end{center}
\begin{center}$
+30m\bigg(\frac{5^{m+1}-5}{4}\bigg)+24\bigg(\frac{5^{m+1}-5}{4}\bigg)\Bigg)$ 
\end{center}
classes of $m$-endomorphisms when $n=5$,
and
\begin{center}
$\frac{1}{720}\Bigg(\bigg(\frac{6^{m+1}-6}{5}\bigg)^6+15\bigg(\frac{4^{m+1}-4}{3}\bigg)^4\bigg(\frac{6^{m+1}-6}{5}\bigg)+45\big(2^{m+1}-2\big)^2\bigg(\frac{6^{m+1}-6}{5}\bigg)^2$
\end{center}
\begin{center}$
+15\bigg(\frac{6^{m+1}-6}{5}\bigg)^3+40\bigg(\frac{3^{m+1}-3}{2}\bigg)^3
\bigg(\frac{6^{m+1}-6}{5}\bigg)
+120m\bigg(\frac{3^{m+1}-3}{2}\bigg)\bigg(\frac{4^{m+1}-4}{3}\bigg)$
\end{center}
\begin{center}$
+40\bigg(\frac{6^{m+1}-6}{5}\bigg)^2+90\big(2^{m+1}-2\big)^2\bigg(\frac{6^{m+1}-6}{5}\bigg)+90\big(2^{m+1}-2\big)\bigg(\frac{6^{m+1}-6}{5}\bigg)$
\end{center}
\begin{center}$
+144m\bigg(\frac{6^{m+1}-6}{5}\bigg)+120\bigg(\frac{6^{m+1}-6}{5}\bigg)\Bigg)$
\end{center}
classes of $m$-endomorphisms when $n=6$. Letting $m$ run through $\mathbb{N}$ in these cases, we again obtain sequences that are not well-known. The following tables display the values of (6) for $n,m\leq 6$. 

\begin{table}[H]
\begin{tabular}{|l|l|l|l|l|l|l|}
\hline
  & $n=1$  & $n=2$  & $n=3$  & $n=4$  & $n=5$   \\ \hline
$m=1$ & 1 & 3 & 7 & 19 & 47 \\ \hline
$m=2$ & 2 & 21 & 304 & 6,915 & 207,258 \\ \hline
$m=3$ & 3 & 105 & 9,958 & 2,079,567 & 746,331,322 \\ \hline
$m=4$ & 4 & 465 & 288,280 & 556,898,155 & 2,406,091,382,736 \\ \hline
$m=5$ & 5 & 1,953 & 7,973,053 & 144,228,436,231 & 7,567,019,254,708,782\\ \hline
$m=6$ & 6 & 8,001 & 217,032,088 & 37,030,504,349,475 & 23,677,181,825,841,420,408 \\ \hline

\end{tabular}
\end{table}

\begin{table}[H]
\begin{tabular}{|l|l|l|l|l|l|l|}
\hline
  & $n=6$  \\ \hline
$m=1$ & 130  \\ \hline
$m=2$ & 7,773,622 \\ \hline
$m=3$ & 409,893,967,167 \\ \hline
$m=4$ & 19,560,646,482,079,624 \\ \hline
$m=5$ & 916,131,223,607,107,471,135 \\ \hline
$m=6$ & 42,770,482,829,102,570,213,645,988\\ \hline
\end{tabular}
\end{table}
\begin{remark}\normalfont The sequence $1, 3, 7, 19, 47, 130, \ldots$ is sequence A001372 in the OEIS.
\end{remark}

\section{Two Natural Variations}

In this section, we highlight two natural variations of Corollary 1. First, we restrict our attention to endomorphisms on $D^+$ that send each element of $D$ to a string of length exactly $m$. We then consider $m$-endomorphisms of the so-called free monoid, which contains the empty string. Expressions analogous to those in \S 3 are derived in each case. 

\subsection{\textit{m}-Uniform Endomorphisms}

Fix $n,m\in \mathbb{N}$, and suppose that $\left\vert D\right\vert=n$. Then $\phi \in End(D^+)$ is called an \textbf{\textit{m}-uniform endomorphism} if and only if $\left\vert \phi(d)\right\vert=m$ for each $d\in D$. In this section, we produce a formula for the number of $m$-uniform endomorphisms on $D^+$ up to combinatorial equivalence. To begin, let $g_1,\ldots,g_{p(n)}\in Sym(D)$ have distinct cycle types. We now put $R=\{W\in D^+:\left\vert W\right\vert =m\}$ and take $\Omega$ to be the set of all mappings of $D$ into $R$. For each $1\leq r\leq p(n)$, we ask for the number of $f\in \Omega$ satisfying 
\begin{center}
$f\circ g_r=\phi_{g_r}\circ f$.
\end{center}
Once again, if $g_r$ has cycle type $k_{r1},\ldots,k_{r\ell_r}$, then for each $1\leq i\leq \ell_r$ we select an element $d_{ri}$ from the cycle corresponding to $k_{ri}$, and count the number of possible values of $f(d_{ri})$. In this case, we must have $\left\vert f(d_{ri})\right\vert =m$, where the elements of $D$ comprising the string $f(d_{ri})$ each belong to a cycle whose length divides $k_{ri}$. Hence, there are
\begin{center}
$\Big(\sum\limits_{j|k_{ri}}jc(g_r,j)\Big)^m$
\end{center}
choices of $f(d_{ri})$, and multiplying over all $i$ yields 
\begin{center}
$\prod\limits_{i=1}^{\ell_r}\Big(\sum\limits_{j|k_{ri}}jc(g_r,j)\Big)^m$
\end{center}
as the value of $\left\vert\{f\in \Omega: f\circ g_r=\phi_{g_r}\circ f\}\right\vert$. Noting that permutations in $Sym(D)$ of the same cycle type fix the same number of $f\in \Omega$,  we multiply by $\left\vert C_{g_r}\right\vert $, sum with respect to $r$, and divide by $n!$ to obtain the following. 
\begin{corollary}
If $\left\vert D\right\vert=n$ and  $g_1,\ldots,g_{p(n)}\in Sym(D)$ have distinct cycle types, then the number of $m$-uniform endomorphisms on $D^+$, up to combinatorial equivalence, is the value of 
\begin{equation}
\frac{1}{n!}\sum\limits_{r=1}^{p(n)}
\Bigg(\left\vert C_{g_r}\right\vert\prod\limits_{i=1}^{\ell_r}\Big(\sum\limits_{j|k_{ri}}jc(g_r,j)\Big)^m\Bigg),
\end{equation}
where $k_{r1},\ldots,k_{r\ell_r}$ is the cycle type of $g_r$, and $C_{g_r}$ is as in Proposition 2.
\end{corollary}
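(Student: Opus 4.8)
The plan is to reuse the Burnside-orbit strategy behind Theorem 1 and Corollary 1, adapting only the final counting step to the constraint $|f(d)| = m$. First I would set $R = \{W \in D^+ : |W| = m\}$ and let $\Omega$ be the set of all $f : D \to R$, so that the $m$-uniform endomorphisms are precisely the $\phi_f$ with $f \in \Omega$. Before applying Burnside's Lemma I would verify that the left action $g \cdot f = \phi_g \circ f \circ g^{-1}$ of $Sym(D)$ still restricts to this smaller $\Omega$: since $\phi_g$ only permutes the letters of a string it preserves length, so $|(\phi_g \circ f \circ g^{-1})(d)| = |f(g^{-1}(d))| = m$ and hence $g \cdot f \in \Omega$. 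Granting this, the reasoning of \S 2.1 transfers unchanged to show that the orbits of $Sym(D)$ on $\Omega$ correspond to the combinatorial equivalence classes of $m$-uniform endomorphisms, and Proposition 1 counts them as $\frac{1}{n!}\sum_{r=1}^{n!} |\{f \in \Omega : f \circ g_r = \phi_{g_r} \circ f\}|$.

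Next I would count the $f$ fixed by a single $g_r$ of cycle type $k_{r1}, \ldots, k_{r\ell_r}$. The crucial observation is that Lemma 1 applies verbatim, because its structural conclusion (condition 2) restricts only which letters may occur in each $f(d_{ri})$ — those lying in cycles whose length divides $k_{ri}$ — and this is insensitive to whether $R$ demands $|W| \le m$ or $|W| = m$. A fixed $f$ is thus determined by its values on the cycle representatives $d_{r1}, \ldots, d_{r\ell_r}$, so I need only count the admissible images of each $d_{ri}$. The letters available to $f(d_{ri})$ number $\sum_{j | k_{ri}} j c(g_r, j)$, and the uniform requirement $|f(d_{ri})| = m$ leaves exactly $\big(\sum_{j | k_{ri}} j c(g_r,j)\big)^m$ choices — precisely the $k = m$ term of the inner sum in Theorem 1. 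Taking the product over $i$ gives $\prod_{i=1}^{\ell_r}\big(\sum_{j | k_{ri}} j c(g_r,j)\big)^m$ fixed maps.

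Finally, I would collapse the sum over all $n!$ permutations to one over cycle types, exactly as in the passage from Theorem 1 to Corollary 1. By Proposition 2 every permutation sharing the cycle type of $g_r$ fixes the same number of $f$, so choosing one representative of each of the $p(n)$ types, weighting by $|C_{g_r}|$, summing, and dividing by $n!$ produces the asserted formula. I expect no serious obstacle, since the uniform case is a restriction of the setting of Theorem 1; the only point demanding genuine care is confirming that the action, and with it the orbit–class correspondence, survives the passage to the smaller $\Omega$, which is exactly where the length-preservation of $\phi_g$ is needed.
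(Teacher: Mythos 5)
Your proposal is correct and follows essentially the same route as the paper: restrict $R$ to strings of length exactly $m$, reuse the Burnside/Lemma~1 machinery so that a fixed $f$ is determined by its images of the cycle representatives, count $\big(\sum_{j\mid k_{ri}} jc(g_r,j)\big)^m$ choices per representative (the $k=m$ term of Theorem~1's inner sum), and collapse over cycle types via Proposition~2. Your explicit check that the $Sym(D)$-action preserves the smaller $\Omega$ (because $\phi_g$ preserves string length) is a small point the paper leaves implicit, but it does not change the argument.
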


When $n=2$, the number of $m$-uniform endomorphisms on $D^+$, up to combinatorial equivalence, is
\begin{center}
$\frac{1}{2}\big(2^{2m}+2^m\big)$.
\end{center}
Letting $m=1,2,3,4,\ldots$ gives 
values $3,10,36,136,\ldots$. This is the sequence A007582 from the OEIS. Moreover, when $n=3$ there are 
\begin{center}
$\frac{1}{6}\big(3^{3m}+3\cdot 3^m+2\cdot 3^m\big)$
\end{center}
classes of $m$-uniform endomorphisms, and letting $m$ run through $\mathbb{N}$ gives the sequence $7,129,3303,88641,\ldots$, which is not well-known. Continuing, the expressions when $n=4,5,6$ are 
\begin{center}
$\frac{1}{24}\big(4^{4m}+6\cdot 2^{2m}\cdot 4^{m}+3\cdot 4^{2m}+8\cdot 4^m + 6\cdot 4^m\big),$ 
\end{center}
\begin{center}
$\frac{1}{120}\big(5^{5m}+10\cdot 3^{3m}\cdot 5^m + 15 \cdot 5^{2m} + 20\cdot 2^{2m}\cdot 5^m + 20\cdot 2^m\cdot 3^m + 30 \cdot 5^m + 24\cdot 5^m\big),$
\end{center}
and
\begin{center}
$\frac{1}{720}\big(6^{6m}+15\cdot 4^{4m}\cdot 6^m + 45\cdot 2^{2m}\cdot 6^{2m} + 15\cdot 6^{3m}+ 40 \cdot 3^{3m}\cdot 6^m$ 
\end{center}
\begin{center}$
+120\cdot 3^m\cdot 4^m + 40\cdot 6^{2m} + 90\cdot 2^{2m} \cdot 6^m + 90\cdot 2^m \cdot 6^m + 144\cdot 6^m + 120 \cdot 6^m\big),$
\end{center}
respectively. The following tables display the values of (7) for $n,m\leq 6$.

\begin{table}[h]
\begin{tabular}{|l|l|l|l|l|l|l|}
\hline
  & $n=1$  & $n=2$  & $n=3$  & $n=4$  & $n=5$   \\ \hline
$m=1$ & 1 & 3 & 7 & 19 & 47 \\ \hline
$m=2$ & 1 & 10 & 129 & 2,836 & 83,061 \\ \hline
$m=3$ & 1 & 36 & 3,303 & 700,624 & 254,521,561 \\ \hline
$m=4$ & 1 & 136 & 88,641 & 178,981,696 & 794,756,352,216 \\ \hline
$m=5$ & 1 & 528 & 7,973,053 & 45,813,378,304 & 2,483,530,604,092,546\\ \hline
$m=6$ & 1 & 2,080  & 64,570,689 &  11,728,130,323,456 & 7,761,021,959,623,948,401 \\ \hline
\end{tabular}
\end{table}

\begin{table}[H]
\begin{tabular}{|l|l|l|l|l|l|l|}
\hline
  & $n=6$  \\ \hline
$m=1$ & 130  \\ \hline
$m=2$ & 3,076,386 \\ \hline
$m=3$ & 141,131,630,530 \\ \hline
$m=4$ & 6,581,201,266,858,896 \\ \hline
$m=5$ & 307,047,288,863,992,988,160 \\ \hline
$m=6$ & 14,325,590,271,500,876,382,987,456 \\ \hline
\end{tabular}
\end{table}

\subsection{The Free Monoid}

If we adjoin the unique string of length 0 (denoted by $\epsilon$) to the set $D^+$, then we form
the set $D^*$. Paired with the operation of string concatenation, $D^*$ forms the \textbf{free monoid} on $D$. We refer to $\epsilon$ as the \textbf{empty string}, and it serves as the identity element in $D^*$. That is, for any $W\in D^*$, 
\begin{center}
$W\epsilon=W=\epsilon W$. 
\end{center}
We define an endomorphism on $D^*$ to be a mapping $\phi:D^*\to D^*$ such that $\phi(W_1W_2)=\phi(W_1)\phi(W_2)$ for all $W_1,W_2\in D^*$. 

\begin{remark}\normalfont
Note that if $\phi$ is an endomorphism on $D^*$, then $\phi(\epsilon)=\epsilon$. This follows since
for any $W\in D^*$, we have
\begin{center}
$\phi(W)=\phi(\epsilon W)=\phi(\epsilon)\phi(W)$,
\end{center}
which implies that $\phi(\epsilon)$ has length 0.
\end{remark}

Now, an $m$-endomorphism on $D^*$ is an endomorphism such that $\left\vert \phi(d)\right\vert \leq m$ for all $d\in D$. Thus, an $m$-endomorphism on $D^*$ can map elements of $D$ to $\epsilon$. To determine the number of $m$-endomorphisms on $D^*$ up to combinatorial equivalence, we put $R=\{W\in D^*:\left\vert W\right\vert \leq m\}$, and for each $g\in Sym(D)$, we ask for the number of 
$f:D\to R$ that are fixed by $g$. Again, it suffices to count the number of possible images under such an $f$ of a single $d\in D$ from each cycle in the decomposition of $g$, and then multiply over all the cycles. But there is now one additional possible value of $f(d)$: the empty string. Hence, if $d$ belongs to a cycle of length $k_i$, then we have
\begin{center}
$1+\sum\limits_{k=1}^m\Big(\sum\limits_{j|k_i}jc(g_r,j)\Big)^k=\sum\limits_{k=0}^m\Big(\sum\limits_{j|k_i}jc(g_r,j)\Big)^k$
\end{center}
choices of $f(d)$. From this observation, we deduce the following. 
\begin{corollary}
Let $\left\vert D\right\vert=n$, and suppose that $g_1,\ldots,g_{p(n)}\in Sym(D)$ have distinct cycle types. Then the number of $m$-endomorphisms on $D^*$, up to combinatorial equivalence, is the value of 
\begin{equation}
\frac{1}{n!}\sum\limits_{r=1}^{p(n)}
\Bigg(\left\vert C_{g_r}\right\vert\prod\limits_{i=1}^{\ell_r}\bigg(\sum\limits_{k=0}^m\Big(\sum\limits_{j|k_{ri}}jc(g_r,j)\Big)^k\bigg)\Bigg),
\end{equation}
where $k_{r1},\ldots,k_{r\ell_r}$ is the cycle type of $g_r$, and $C_{g_r}$ is as in Proposition 2.
\end{corollary}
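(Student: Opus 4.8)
The plan is to run the same pipeline that yielded Theorem 1 and Corollary 1, modifying each step to accommodate the empty string. First I would reinstate the group-action framework of \S 2.1: taking $R=\{W\in D^*:|W|\le m\}$ and $\Omega$ to be the set of all maps $f:D\to R$, the group $Sym(D)$ acts on $\Omega$ by $g\cdot f=\phi_g\circ f\circ g^{-1}$. Since every endomorphism of $D^*$ satisfies $\phi(\epsilon)=\epsilon$ (as observed in \S 4.2), an $m$-endomorphism on $D^*$ is again determined by its restriction to $D$, so the orbits of this action are exactly the combinatorial equivalence classes of $m$-endomorphisms on $D^*$. Proposition 1 then reduces the problem to counting, for each $g\in Sym(D)$, the number of $f\in\Omega$ with $f\circ g=\phi_g\circ f$.

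The heart of the matter is re-deriving this fixed-point count, which rests on checking that Lemma 1 survives the passage to $D^*$. I would confirm that both the statement and proof of Lemma 1 go through with the single new feature that $f(d_i)$ may equal $\epsilon$. The converse direction is untouched, as the induction showing that condition 1 suffices never invokes nonemptiness of $f(d_i)$. In the forward direction, when $f(d_i)=\epsilon$ condition 2 holds vacuously, there being no letters to constrain; this is consistent with the fact that $\phi_g(\epsilon)=\epsilon$, so that the assignment $d_i\mapsto\epsilon$ is compatible with the fixing equation for a cycle of any length. Consequently, for a representative $d_{ri}$ of a cycle of length $k_{ri}$, the admissible images of $f$ are either $\epsilon$ or a nonempty string of some length $1\le k\le m$ all of whose letters lie in cycles whose lengths divide $k_{ri}$.

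I would then tally these images exactly as in \S 3, the only new contribution being $\epsilon$. There are $\big(\sum_{j|k_{ri}}jc(g_r,j)\big)^k$ admissible nonempty strings of length $k$, together with the one further choice $\epsilon$, so the total number of choices of $f(d_{ri})$ is
\begin{equation*}
1+\sum_{k=1}^m\Big(\sum_{j|k_{ri}}jc(g_r,j)\Big)^k=\sum_{k=0}^m\Big(\sum_{j|k_{ri}}jc(g_r,j)\Big)^k,
\end{equation*}
the equality holding because the $k=0$ term equals $1$. Since Lemma 1 shows such an $f$ is determined by its values on the $d_{ri}$, multiplying over $1\le i\le\ell_r$ gives $\prod_{i=1}^{\ell_r}\sum_{k=0}^m\big(\sum_{j|k_{ri}}jc(g_r,j)\big)^k$ maps fixed by $g_r$.

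To finish, I would pass from individual permutations to cycle types precisely as in the derivation of Corollary 1: permutations of a common cycle type fix equally many maps, the cycle types are the $p(n)$ integer partitions of $n$, and so selecting one representative $g_r$ per type, weighting its fixed-map count by $|C_{g_r}|$ from Proposition 2, summing over the $p(n)$ representatives, and dividing by $n!$ delivers formula (8). I anticipate no real difficulty, since every step echoes the free-semigroup argument; the only point warranting genuine care is the bookkeeping above, namely verifying that $\epsilon$ is admissible for a cycle of every length and is counted exactly once, so that it contributes the single new $k=0$ term and nothing is double-counted.
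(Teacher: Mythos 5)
Your proposal is correct and follows essentially the same route as the paper: Burnside's Lemma via the $Sym(D)$-action, a per-cycle count of admissible images of the representatives $d_{ri}$ with $\epsilon$ contributing the single new $k=0$ term, and aggregation over cycle types weighted by $\left\vert C_{g_r}\right\vert$ from Proposition 2. Your explicit verification that Lemma 1 (both directions, including the vacuous case $f(d_i)=\epsilon$ and the fact $\phi_g(\epsilon)=\epsilon$) carries over to $D^*$ is a point the paper passes over silently, but it is the same argument, not a different one.
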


When $n=2$, the number of $m$-endomorphisms on $D^*$, up to combinatorial equivalence, is
\begin{align*}
\frac{1}{2}\Big((2^{m+1}-1)^2+(2^{m+1}-1)\Big).\\ 
\end{align*}
This is sequence A006516 from the OEIS. The corresponding expressions for $n=3,4,5,6$ are 
\begin{align*}
\frac{1}{6}\Bigg(\bigg(\frac{3^{m+1}-1}{2}\bigg)^3 + 3(m+1)\bigg(\frac{3^{m+1}-1}{2}\bigg) + 2\bigg(\frac{3^{m+1}-1}{2}\bigg)\Bigg),\\
\end{align*}
\begin{center}
$\frac{1}{24}\Bigg(\bigg(\frac{4^{m+1}-1}{3}\bigg)^4+ 6\Big(2^{m+1}-1\Big)^2\bigg(\frac{4^{m+1}-1}{3}\bigg)+3\bigg(\frac{4^{m+1}-1}{3}\bigg)^2$
\end{center}
\begin{center}
$+8(m+1)\bigg(\frac{4^{m+1}-1}{3}\bigg)+6\bigg(\frac{4^{m+1}-1}{3}\bigg)\Bigg),$
\end{center}
\begin{center}
$\frac{1}{120}\Bigg(\bigg(\frac{5^{m+1}-1}{4}\bigg)^5+10\bigg(\frac{3^{m+1}-1}{2}\bigg)^3\bigg(\frac{5^{m+1}-1}{4}\bigg)+15(m+1)\bigg(\frac{5^{m+1}-1}{4}\bigg)^2$
\end{center}
\begin{center}
$+20\big(2^{m+1}-1\big)^2\bigg(\frac{5^{m+1}-1}{4}\bigg)+20\big(2^{m+1}-1\big)\bigg(\frac{3^{m+1}-1}{2}\bigg)$
\end{center}
\begin{center}$
+30(m+1)\bigg(\frac{5^{m+1}-1}{4}\bigg)+24\bigg(\frac{5^{m+1}-1}{4}\bigg)\Bigg),$
\end{center}
and
\begin{center}
$\frac{1}{720}\Bigg(\bigg(\frac{6^{m+1}-1}{5}\bigg)^6+15\bigg(\frac{4^{m+1}-1}{3}\bigg)^4\bigg(\frac{6^{m+1}-1}{5}\bigg)$
\end{center}
\begin{center}$
+45\big(2^{m+1}-1\big)^2\bigg(\frac{6^{m+1}-1}{5}\bigg)^2+15\bigg(\frac{6^{m+1}-1}{5}\bigg)^3$
\end{center}
\begin{center}
$+40\bigg(\frac{3^{m+1}-1}{2}\bigg)^3
\bigg(\frac{6^{m+1}-1}{5}\bigg)
+120(m+1)\bigg(\frac{3^{m+1}-1}{2}\bigg)\bigg(\frac{4^{m+1}-1}{3}\bigg)$
\end{center}
\begin{center}$
+40\bigg(\frac{6^{m+1}-1}{5}\bigg)^2+90\big(2^{m+1}-1\big)^2\bigg(\frac{6^{m+1}-1}{5}\bigg)$
\end{center}
\begin{center}$
+90\big(2^{m+1}-1\big)\bigg(\frac{6^{m+1}-1}{5}\bigg)
+144(m+1)\bigg(\frac{6^{m+1}-1}{5}\bigg)+120\bigg(\frac{6^{m+1}-1}{5}\bigg)\Bigg).$
\end{center}
Once again, the sequences given by these expressions appear to be little-known. The following tables give the values of (8) for $n,m\leq 6$.

\begin{table}[h]
\begin{tabular}{|l|l|l|l|l|l|l|}
\hline
  & $n=1$  & $n=2$  & $n=3$  & $n=4$  & $n=5$   \\ \hline
$m=1$ & 2 & 6 & 16 & 45 & 121 \\ \hline
$m=2$ & 3 & 28 & 390 & 8,442 & 244,910 \\ \hline
$m=3$ & 4 & 120 & 10,760 & 2,180,845 & 770,763,470 \\ \hline
$m=4$ & 5 & 496 & 295,603 & 563,483,404 & 2,421,556,983,901 \\ \hline
$m=5$ & 6 & 2,016 & 8,039,304 & 144,651,898,755 & 2,370,422,688,990,078 \\ \hline
$m=6$ & 7 & 8,128 & 217,629,416 & 37,057,640,711,850 & 23,683,244,198,577,149,289 \\ \hline

\end{tabular}
\end{table}

\begin{table}[H]
\begin{tabular}{|l|l|l|l|l|l|l|}
\hline
  & $n=6$  \\ \hline
$m=1$ & 338  \\ \hline
$m=2$ & 8,967,034 \\ \hline
$m=3$ & 419,527,164,799 \\ \hline
$m=4$ & 19,636,295,549,860,505 \\ \hline
$m=5$ & 916,720,535,022,517,503,173 \\ \hline
$m=6$ & 42,775,066,732,111,188,868,070,978  \\ \hline
\end{tabular}
\end{table}

\section{\textbf{\textit{($\chi$,$\zeta$)}}-Patterns}

In closing, we briefly place the relation $\sim$ from \S 2 into a more general context. Let $G$ be a finite group, and let $N$ and $M$ be finite nonempty sets. Suppose  that $\chi:G\to Sym(N)$ and $\zeta:G\to Sym(M)$ are group homomorphisms. Denote the set of all functions from $N$ into $M$ by $M^N$. (This notation comes from \cite{de1972enumeration}.) De Bruijn introduced the equivalence relation $E_{\chi,\zeta}$ on $M^N$ defined by 
\begin{center}
$(f_1,f_2)\in E_{\chi,\zeta} \iff f_2\circ \chi(\gamma)= \zeta(\gamma)\circ f_1$ for some $\gamma \in G$.
\end{center}
\begin{example} \normalfont \cite{de1972enumeration} Suppose that $N$ is a set of size $n\in \mathbb{N}$, and define an equivalence relation $S$ on the set of all mappings of $N$ into itself by 
\begin{center}
$(f_1,f_2)\in S \iff f_2\circ \gamma=\gamma\circ f_1 \text{ for some } \gamma\in Sym(N)$.
\end{center}
Letting $G=Sym(N)$, $M=N$, and $\chi=\zeta$ be the identity homomorphism on $Sym(N)$ shows that $S$ is a special case of the relation $E_{\chi,\zeta}$. Moreover, the sequence in Remark 2 gives the number of equivalence classes under $S$ for $n=1,2,3\ldots $. (See \S 3 of \cite{de1972enumeration}.)
\end{example}

The relation $E_{\chi,\zeta}$ stems from the left action of $G$ on $M^N$ given by
\begin{center}
$\gamma \cdot f = \zeta(\gamma) \circ f\circ \chi(\gamma^{-1}),$
\end{center}
for all $\gamma\in G$, $f\in M^N$. De Bruijn referred to the orbits of $G$ on $M^N$ as \textbf{\textit{($\chi,\zeta$)}-patterns}, and provided a formula for the number of these by applying Burnside's Lemma, and then evaluating $\left\vert\{f\in M^N : \gamma \cdot f = f\}\right\vert$ for each $\gamma \in G$. (See \cite{de1972enumeration}.) But the relation $\sim$ on the set 
$\Omega=\{\text{mappings of } D \text{ into } R\}$,
where $0< \left\vert D \right\vert < \infty$ and $R=\{W\in D^+: \left\vert W\right\vert \leq m\}$, is a special instance of the relation $E_{\chi,\zeta}$. To see this, take $N=D$, $M=R$, and 
$G=Sym(D)$. Let $\chi$ be the identity homomorphism on $Sym(D)$, and define $\zeta:G\to Sym(R)$ by
\begin{center}
$\zeta(g)=\phi_g|_R$,
\end{center}
for all $g\in Sym(D)$. Then for any $g,g'\in Sym(D)$,
\begin{center}
$\zeta(g\circ g') = \phi_{g\circ g'}|_R = (\phi_g\text{ }\circ\text{ }\phi_{g'})|_R = \phi_g|_R\text{ }\circ\text{ } \phi_{g'}|_R = \zeta(g)\circ \zeta (g')$,
\end{center}
so $\zeta$ is a group homomophism. Now, for any $f_1,f_2\in \Omega$, we have 

\begin{align*}
f_1\sim f_2 &\iff f_2\text{ }\circ\text{ } g = \phi_g \text{ }\circ\text{ } f_1 = \phi_g|_R\text{ } \circ \text{ } f_1 \text{ for some } g\in Sym(D)\\ &\iff f_2\text{ }\circ\text{ } \chi(g) = \zeta(g)\text{ } \circ \text{ }f_1 \text{ for some } g\in Sym(D) \\ &\iff (f_1,f_2)\in E_{\chi,\zeta}.\\
\end{align*}
It follows that the equivalence classes in $\Omega$ under the relation $\sim$ are $(\chi,\zeta)$-patterns, for $\chi$,$\zeta$ chosen as above. In particular, our Theorem 1 is a special case of de Bruijn's formula.

\end{document}